\newtheorem{theorem}{Theorem}[section]
\newtheorem{lemma}[theorem]{Lemma}
\newtheorem{proposition}[theorem]{Proposition}
\newtheorem{corollary}[theorem]{Corollary}
\theoremstyle{remark}
\theoremstyle{definition}
\newtheorem{definition}[theorem]{Definition}
\numberwithin{equation}{section}
\newcommand{\et}{\quad\mbox{and}\quad}
\newcommand{\bP}{\mathbb{P}}
\newcommand{\bQ}{\mathbb{Q}}
\newcommand{\bR}{\mathbb{R}}
\newcommand{\bZ}{\mathbb{Z}}
\newcommand{\cC}{{\mathcal{C}}}
\newcommand{\dist}{\mathrm{dist}}
\newcommand{\proj}{\mathrm{proj}}
\newcommand{\ue}{\mathbf{e}}
\newcommand{\uL}{\mathbf{L}}
\newcommand{\up}{\mathbf{p}}
\newcommand{\uP}{\mathbf{P}}
\newcommand{\uu}{\mathbf{u}}
\newcommand{\uv}{\mathbf{v}}
\newcommand{\ux}{\mathbf{x}}
\newcommand{\uX}{\mathbf{X}}
\newcommand{\uy}{\mathbf{y}}
\newcommand{\vol}{\mathrm{vol}}
\begin{document}

\baselineskip=16.3pt

\title[Points realizing regular systems]
{Construction of points realizing the regular systems
of Wolfgang Schmidt and Leonard Summerer}
\author{Damien Roy}
\address{
   D\'epartement de Math\'ematiques\\
   Universit\'e d'Ottawa\\
   585 King Edward\\
   Ottawa, Ontario K1N 6N5, Canada}
\email{droy@uottawa.ca}
\subjclass[2000]{Primary 11J13; Secondary 11J82}
\dedicatory{On the occasion of Axel Thue's 150th birthday,\\
with special homage to Professor Wolfgang Schmidt on his 80th birthday.}
\thanks{Work partially supported by NSERC}

\begin{abstract}
In a series of recent papers, W.~M.~Schmidt and L.~Summerer developed
a new theory by which they recover all major generic inequalities
relating exponents of Diophantine approximation to a point in $\bR^n$,
and find new ones.  Given a point in $\bR^n$, they first show how most
of its exponents of Diophantine approximation can be computed in terms
of the successive minima of a parametric family of convex bodies
attached to that point.  Then they prove that these successive minima
can in turn be approximated by a certain class of functions which they
call $(n,\gamma)$-systems.  In this way, they bring the whole problem
to the study of these functions.  To complete the theory, one would like
to know if, conversely, given an $(n,\gamma)$-system, there exists a
point in $\bR^n$ whose associated family of convex bodies has
successive minima which approximate that function.  In the present
paper, we show that this is true for a class of functions which
they call regular systems.
\end{abstract}

\maketitle

\section{Introduction}
 \label{intro}

Let $\xi_1,\dots,\xi_{n-1}\in\bR$ for some
integer $n\ge 2$.  A basic problem in Diophantine
approximation is to measure how well the point $(\xi_1,\dots,\xi_{n-1})$
can be approximated by rational points with common denominators below
a given bound, and how small can integer linear combinations of
$1,\xi_1,\dots,\xi_{n-1}$ be, given an upper bound on
the absolute values of their coefficients.  This gives
rise to four classical exponents of approximation which are linked
by the dualities of A.~Y.~Khintchine \cite{Kh1926a, Kh1926b} and
V.~Jarn\'{\i}k \cite{Ja1938}.  In the case $n=3$, M.~Laurent
achieved recently a complete description of the joint spectrum
of these four exponents \cite{La2009}. Such a description is still
lacking in higher dimensions. However, N.~Moshchevitin \cite{Mo2012}
recently found a new relation between these exponents in the case
$n=4$.  Then, a second proof of it together with a proof of a ``dual''
relation was given by W.~M.~Schmidt and L.~Summerer in \cite{SS2013b}
using their theory of parametric geometry of numbers.  To show that
both relations are best possible these authors ask for the existence of
points in $\bR^4$ satisfying certain conditions that we will recall
below.  The purpose of this note is to construct such points.
For the interested reader, it can serve
as an introduction to \cite{R_preprint} where we construct points
satisfying the fully general conditions provided by the theory of
Schmidt and Summerer.

This wonderful theory, called parametric geometry of numbers by their
authors, was developed first in dimension $n=3$ in \cite{SS2009}
and then for general dimension $n\ge 2$ in \cite{SS2013a}.  It provides
a very precise description of the behavior of the successive minima of
certain parametric families of convex bodies of $\bR^n$.  Here, the term
\emph{convex body of $\bR^n$} refers to a compact \hbox{$0$-symmetric}
neighborhood $\cC$ of $0$ in $\bR^n$.  We recall that, for $j=1,\dots,n$,
the $j$-th minimum $\lambda_j(\cC)$ of such a set is the smallest
real number $\lambda$ such that $\lambda\cC$ contains at least
$j$ linearly independent elements of $\bZ^n$.  Clearly these minima form a
monotone increasing sequence $\lambda_1(\cC)\le\cdots\le\lambda_n(\cC)$.
Throughout this paper, we assume that the integer $n$ is at least $2$.

Let $\ux\cdot\uy$ denote the usual scalar product of vectors
$\ux, \uy \in \bR^n$, and let $\|\ux\|=(\ux\cdot\ux)^{1/2}$
denote the corresponding norm of a vector $\ux$.
For our purpose, we work with the families of convex bodies
\[
 \cC_\uu(Q)=\big\{\ux\in\bR^n\,;\,
           \|\ux\|\le Q, \
           |\ux\cdot\uu|\le Q^{-(n-1)}\,
        \big\}
 \quad
 (Q\ge 1).
\]
where $\uu$ is a fixed unit vector of $\bR^n$.  These are essentially
the polar reciprocal bodies to those considered in \cite{SS2013a}
but in view of the close relations linking the successive minima of a
convex body to those of its polar reciprocal body, this makes very
little difference.  Besides its own fundamental intrinsic interest,
a strong motivation for studying the successive minima of $\cC_\uu(Q)$
as functions of $Q$ comes from the fact that, if we choose $\uu$
to be a multiple of $(1,\xi_1,\dots,\xi_{n-1})$, then the four
exponents to which we alluded above can be computed directly from
these functions (see \cite[\S1]{SS2013a}), and the same holds for
the intermediate exponents studied by Y.~Bugeaud and M.~Laurent
in \cite{BL2010} (see also \cite{Ge2012} and \cite{Sc1967}).
In fact, let
\[
 \Delta_n
 :=\{(x_1,\dots,x_n)\in\bR^n\,;\, x_1\le \cdots\le x_n\},
\]
and consider the continuous map $\uL_\uu\colon[0,\infty)\to
\Delta_n$ given by
\[
 L_\uu(q)
 =\big(\log\lambda_1(\cC_\uu(e^q)),
       \dots,
       \log\lambda_n(\cC_\uu(e^q))
  \big)
 \quad
 (q\ge 0).
\]
An approximation of $\uL_\uu$ with bounded
difference suffices by far to compute these exponents.

In \cite[\S2]{SS2013a}, Schmidt and Summerer define, for each
$\gamma\ge 0$ and each $a\ge 0$, the notion of an
$(n,\gamma)$-system on the interval $[a,\infty)$.  This is
a continuous map $\uP\colon[a,\infty)\to\bR^n$ which satisfies a
certain number of conditions which, although relatively easy to state,
are somewhat difficult to analyze.  The largest part of
their paper deals with this issue.  Here, since we
essentially use the polar reciprocal bodies, the relevant
notion for us is a dual one as in \cite[\S7]{SS2013b}.
However, for simplicity, we keep the same terminology.
Then, modulo slight modifications, the authors establish
in \cite[\S2]{SS2013a} the existence of a constant $\gamma>0$
and of an $(n,\gamma)$-system $\uP\colon[0,\infty)\to\bR^n$
such that $\uL_\uu-\uP$ is bounded on $[0,\infty)$.

As shown in \cite[\S3]{SS2013a}, the behavior of an $(n,0)$-system
is much easier to understand.  In particular, such a map takes
values in $\Delta_n$. In \cite{R_preprint}, we show that, for each
$(n,\gamma)$-system $\uP\colon[0,\infty)\to\bR^n$, there exist a real
number $a\ge 0$ and an $(n,0)$-system $\uX\colon[a,\infty)\to\Delta_n$
for which the difference $\uP-\uX$ is bounded on $[a,\infty)$.
In view of the result of Schmidt and Summerer mentioned above,
this means that, for any
unit vector $\uu$ in $\bR^n$, there exists an $(n,0)$-system
$\uX\colon[a,\infty)\to\Delta_n$ for which $\uL_\uu-\uX$ is bounded
on $[a,\infty)$.  In \cite{R_preprint}, we also show that the
converse is true namely that, for each $(n,0)$-system
$\uX\colon[a,\infty)\to\Delta_n$, there exists a unit vector
$\uu$ of $\bR^n$ such that $\uL_\uu-\uX$ is bounded on $[a,\infty)$.
In particular, this proves a conjecture of \cite[\S4]{SS2013a}
to the effect that all generic relations between
exponents of approximation can be derived from the study
of $(n,0)$-systems.

Our goal here is to construct unit vectors associated to a
class of $(n,0)$-systems which is slightly more general than
the \emph{regular systems} of \cite[\S3]{SS2013b}.  To present
this class of \emph{quasi-regular $(n,0)$-systems}, we
follow Schmidt and Summerer in \cite[\S3]{SS2013a} and
define the \emph{combined graph} of
a set of real valued functions defined on an interval $I$ to
be the union of their graphs in $I\times\bR$.  For a
function $\uP=(P_1,\dots,P_n)\colon [a,\infty)\to \Delta_n$,
and a sub-interval $I$ of $[a,\infty)$, we define the
\emph{combined graph of\/ $\uP$ above $I$} to be the combined
graph of its components $P_1,\dots,P_n$ restricted to
$I$.  If $P$ is continuous and if the real numbers
$q\ge a$ at which $P_1(q),\dots,P_n(q)$ are not all distinct
form a discrete subset of $[a,\infty)$, then the map
$\uP$ is uniquely determined by its combined graph over the
full interval $[a,\infty)$.  We also denote by
$\Phi_n\colon\bR^n\to\Delta_n$ the continuous
map which lists the coordinates of a point in monotone
increasing order.

\begin{definition}
 \label{def}
A \emph{quasi-regular $(n,0)$-system} is a continuous function
$\uP\colon[a,\infty)\to\Delta_n$ for which
there exists an unbounded strictly increasing sequence
of positive real numbers $(X_i)_{i\ge 1}$ such that,
upon defining
\[
 q_i=(X_i+\cdots+X_{i+n-1})/n
 \quad
 (i\ge 1),
\]
we have $a=q_1$ and, for each $i\ge 1$,
\begin{equation}
 \label{intro:eq:uP}
 \uP(q)=\Phi_n(X_i+n(q-q_i)-q,\,X_{i+1}-q,\,\dots,\,X_{i+n-1}-q)
 \quad
 (q_i\le q\le q_{i+1}).
\end{equation}
If, for some $\delta>0$, we also have
$X_{i+1}\ge X_i+\delta$ for each $i\ge 1$, then we say that
\emph{$\uP$ has mesh at least $\delta$}.  If there
exists $\rho>1$ such that $X_{i+1}=\rho X_i$ for each $i\ge 1$,
then we say that $\uP$ is \emph{regular}.
\end{definition}

Since $X_i+n(q_{i+1}-q_i)-q_{i+1}=X_{i+n}-q_{i+1}$, the
condition \eqref{intro:eq:uP} implies that
\[
 \uP(q_i)=(X_i-q_i,\dots,X_{i+n-1}-q_i)
 \et
 \uP(q_{i+1})=(X_{i+1}-q_{i+1},\dots,X_{i+n}-q_{i+1}).
\]
Therefore, upon writing $\uP=(P_1,\dots,P_n)$, it
is equivalent to asking that the
combined graph of $\uP$ above $[q_i,q_{i+1}]$ consists of
one line segment of slope $n-1$ joining $(q_i,P_1(q_i))$ to
$(q_{i+1},P_n(q_{i+1}))$, together with $n-1$ distinct line
segments of slope $-1$ joining $(q_i,P_{j+1}(q_i))$ to
$(q_{i+1},P_j(q_{i+1}))$ for $j=1,\dots,n-1$.

The above remark shows in particular that any
choice of $0<X_1<X_2<\cdots$ with $\lim_{i\to\infty}X_i=\infty$
gives rise to a continuous map $\uP\colon[q_1,\infty)\to\Delta_n$
satisfying \eqref{intro:eq:uP} for each $i\ge 1$.  It also implies
that, in turn, such a map $\uP$ uniquely determines the sequence
$(X_i)_{i\ge 1}$ because the local minima of its first component
$P_1$ are the points $(q_i,P_1(q_i))=(q_i,X_i-q_i)$ $(i\ge 1)$.
This is illustrated on Figure 1 below which shows in solid lines
the combined graph of a quasi-regular $(4,0)$-system over an
interval $[q_1,q_5]$.

\begin{figure}[ht]
  \begin{tikzpicture}[xscale=0.25,yscale=0.1]
    \draw [->] (-0.25,0) -- (43,0) node[below] {$q$};
      \pgfmathparse{-10}
        \let\xa\pgfmathresult
      \pgfmathparse{-3}
        \let\xb\pgfmathresult
      \pgfmathparse{3}
        \let\xc\pgfmathresult
      \pgfmathparse{10}
        \let\xd\pgfmathresult
      \pgfmathparse{11}
        \let\xqi\pgfmathresult
        \let\xq\pgfmathresult
      \pgfmathparse{-14}
        \let\bas\pgfmathresult
      \pgfmathparse{31}
        \let\haut\pgfmathresult
      \pgfmathparse{\xqi}
        \let\delta\pgfmathresult
      \draw [dashed] (0,\xa+\delta) -- (\xq,\xa);
      \draw [thick] (-0.2,\xa+\delta) -- (0.2,\xa+\delta);
      \draw (-0.5,\xa+\delta) node[left] {$X_1$};
      \draw [dashed] (0,\xb+\delta) -- (\xq,\xb);
      \draw [thick] (-0.2,\xb+\delta) -- (0.2,\xb+\delta);
      \draw (-0.5,\xb+\delta) node[left] {$X_2$};
      \draw [dashed, very thick] (0,\xc+\delta) -- (\xq,\xc);
      \draw [thick] (-0.2,\xc+\delta) -- (0.2,\xc+\delta);
      \draw (-0.5,\xc+\delta) node[left] {$X_3$};
      \draw [dashed] (0,\xd+\delta) -- (\xq,\xd);
      \draw [thick] (-0.2,\xd+\delta) -- (0.2,\xd+\delta);
      \draw (-0.5,\xd+\delta) node[left] {$X_4$};
      \draw (0,\bas) node[below] {$0$};
      \draw [-] (0,\bas) -- (0,\haut);
      \draw [dotted] (\xq,\bas) -- (\xq,\xd);
      \draw (\xq,\bas) node[below] {$q_1$};
      \pgfmathparse{7}
        \let\deltai\pgfmathresult
      \pgfmathparse{\xb-\deltai} \let\xai\pgfmathresult
      \pgfmathparse{\xc-\deltai} \let\xbi\pgfmathresult
      \pgfmathparse{\xd-\deltai} \let\xci\pgfmathresult
      \pgfmathparse{\xa+3*\deltai} \let\xdi\pgfmathresult
      \pgfmathparse{\xq+\deltai} \let\xqi\pgfmathresult
      \draw [thick] (\xq,\xa) -- (\xqi,\xdi);
      \draw [thick] (\xq,\xb) -- (\xqi,\xai);
      \draw [very thick] (\xq,\xc) -- (\xqi,\xbi);
      \draw [thick] (\xq,\xd) -- (\xqi,\xci);
      \draw [dashed] (0,\xdi+\delta+\deltai) -- (\xqi,\xdi);
      \draw [thick] (-0.2,\xdi+\delta+\deltai) -- (0.2,\xdi+\delta+\deltai);
      \draw (-0.5,\xdi+\delta+\deltai) node[left] {$X_5$};
      \pgfmathparse{\xqi+(\haut-\xdi)/3} \let\abscisse\pgfmathresult
      \draw [dashed] (\xqi,\xdi) -- (\abscisse,\haut);
      \draw [dotted] (\xqi,\bas) -- (\xqi,\xdi);
      \draw (\xqi,\bas) node[below] {$q_2$};
      \pgfmathparse{6}
        \let\deltaii\pgfmathresult
      \pgfmathparse{\xbi-\deltaii} \let\xaii\pgfmathresult
      \pgfmathparse{\xci-\deltaii} \let\xbii\pgfmathresult
      \pgfmathparse{\xdi-\deltaii} \let\xcii\pgfmathresult
      \pgfmathparse{\xai+3*\deltaii} \let\xdii\pgfmathresult
      \pgfmathparse{\xqi+\deltaii} \let\xqii\pgfmathresult
      \draw [thick] (\xqi,\xai) -- (\xqii,\xdii);
      \draw [very thick] (\xqi,\xbi) -- (\xqii,\xaii);
      \draw [thick] (\xqi,\xci) -- (\xqii,\xbii);
      \draw [thick] (\xqi,\xdi) -- (\xqii,\xcii);
      \pgfmathparse{\xqii+(\haut-\xdii)/3} \let\abscisse\pgfmathresult
      \draw [dashed] (\xqii,\xdii) -- (\abscisse,\haut);
      \pgfmathparse{\xqii-(\haut-\xdii)} \let\abscisse\pgfmathresult
      \draw [dashed] (\abscisse,\haut) -- (\xqii,\xdii);
      \draw [dotted] (\xqii,\bas) -- (\xqii,\xdii);
      \draw (\xqii,\bas) node[below] {$q_3$};
      \pgfmathparse{8.5}
        \let\deltaiii\pgfmathresult
      \pgfmathparse{\xbii-\deltaiii} \let\xaiii\pgfmathresult
      \pgfmathparse{\xcii-\deltaiii} \let\xbiii\pgfmathresult
      \pgfmathparse{\xdii-\deltaiii} \let\xciii\pgfmathresult
      \pgfmathparse{\xaii+3*\deltaiii} \let\xdiii\pgfmathresult
      \pgfmathparse{\xqii+\deltaiii} \let\xqiii\pgfmathresult
      \draw [very thick] (\xqii,\xaii) -- (\xqiii,\xdiii);
      \draw [thick] (\xqii,\xbii) -- (\xqiii,\xaiii);
      \draw [thick] (\xqii,\xcii) -- (\xqiii,\xbiii);
      \draw [thick] (\xqii,\xdii) -- (\xqiii,\xciii);
      \pgfmathparse{\xqiii+(\haut-\xdiii)/3} \let\abscisse\pgfmathresult
      \draw [very thick, dashed] (\xqiii,\xdiii) -- (\abscisse,\haut);
      \draw (\abscisse,\haut) node[below right] {$\ux^*_3$};
      \pgfmathparse{\xqiii-(\haut-\xdiii)} \let\abscisse\pgfmathresult
      \draw [dashed] (\abscisse,\haut) -- (\xqiii,\xdiii);
      \draw [dotted] (\xqiii,\bas) -- (\xqiii,\xdiii);
      \draw (\xqiii,\bas) node[below] {$q_4$};
      \pgfmathparse{7.5}
        \let\deltaiv\pgfmathresult
      \pgfmathparse{\xbiii-\deltaiv} \let\xaiv\pgfmathresult
      \pgfmathparse{\xciii-\deltaiv} \let\xbiv\pgfmathresult
      \pgfmathparse{\xdiii-\deltaiv} \let\xciv\pgfmathresult
      \pgfmathparse{\xaiii+3*\deltaiv} \let\xdiv\pgfmathresult
      \pgfmathparse{\xqiii+\deltaiv} \let\xqiv\pgfmathresult
      \draw [thick] (\xqiii,\xaiii) -- (\xqiv,\xdiv);
      \draw [thick] (\xqiii,\xbiii) -- (\xqiv,\xaiv);
      \draw [thick] (\xqiii,\xciii) -- (\xqiv,\xbiv);
      \draw [thick] (\xqiii,\xdiii) -- (\xqiv,\xciv);
      \draw [dotted] (\xqiv,\bas) -- (\xqiv,\xdiv);
      \draw (\xqiv,\bas) node[below] {$q_5$};
  \end{tikzpicture}
\caption{Example of combined graph of a quasi-regular
$(4,0)$-system over an interval $[q_1,q_5]$,
with the trajectory of an ideal point $\ux^*_3$ enlightened.}
\end{figure}
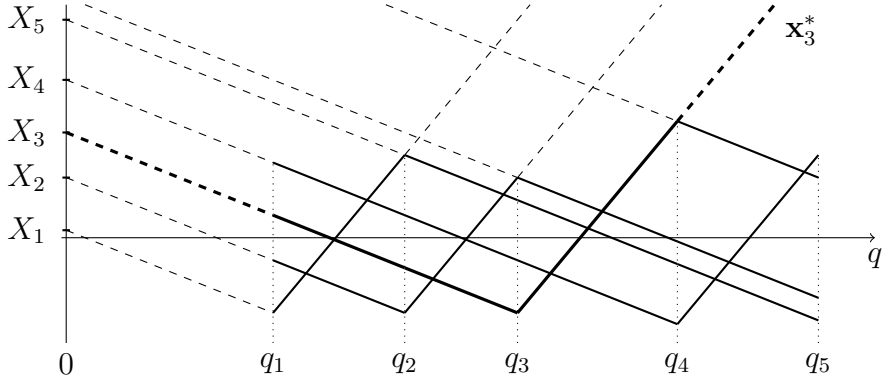

A general $(n,0)$-system also comes with a partition of its
domain into subintervals above which its combined graph consists
of a line segment of slope $n-1$ and $n-1$ line segments of
slope $-1$, but there is more flexibility in the way in which these line
segments connect the points above the left and the right
end-points of the subintervals.
In the case of a quasi-regular $(n,0)$-system, the line segments
of slope $n-1$ always connect the lowest point on the left
to the highest point on the right.

The main result of this paper is the following statement where
$\|\ \|_\infty$ stands for the maximum norm.

\begin{theorem}
 \label{intro:thm}
Let $\uP\colon[q_1,\infty)\to\Delta_n$ be a quasi-regular
$(n,0)$-system with mesh at least $\log 4$.
Then there exists a unit vector $\uu$ of $\bR^n$ such that
\[
 \|\uP(q)-\uL_\uu(q)\|_\infty \le 2n^2
 \quad
 (q\ge q_1).
\]
\end{theorem}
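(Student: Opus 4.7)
I would first reformulate the theorem as a construction problem: find integer vectors $\ue_i\in\bZ^n$ (for $i\ge 1$) and a unit vector $\uu\in\bR^n$ such that $\log\|\ue_i\|\approx X_i$ and $\log|\ue_i\cdot\uu|\approx -S_i$, where $S_i:=X_{i+1}+\cdots+X_{i+n-1}$, with any $n$ consecutive $\ue_i$'s linearly independent. The combined graph of $\uP$ on $[q_i,q_{i+1}]$ is then the union of the relevant branches of the $n$ V-shaped trajectories
\[
 V_j(q)=\max\bigl(X_j-q,\,(n-1)q-S_j\bigr),\quad j=i,\ldots,i+n-1,
\]
each of which, under the correspondence $\ue\mapsto(q\mapsto\max(\log\|\ue\|-q,\log|\ue\cdot\uu|+(n-1)q))$ attached to the bodies $\cC_\uu(e^q)$, is realised by the vector $\ue_j$.

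The construction is carried out inductively. Take $\ue_1,\dots,\ue_n$ to be integer scalings of the standard basis with $\|\ue_i\|$ close to $e^{X_i}$. For $i>n$, set
\[
 \ue_i=a\,\ue_{i-n}+\sum_{j=1}^{n-1} b_j\,\ue_{i-n+j},
\]
with $a\in\bZ_{>0}$ of size $\approx e^{X_i-X_{i-n}}$ (so that $\|\ue_i\|\approx e^{X_i}$) and $b_j\in\bZ$ fixed by integer rounding so as to keep the projection of $\ue_i$ onto the $2$-plane perpendicular to $\mathrm{span}(\ue_{i-n+2},\dots,\ue_{i-1})$ of magnitude comparable to $\|\ue_i\|$. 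Linear independence of $\ue_{i-n+1},\dots,\ue_i$ follows automatically from $a\neq 0$. Simultaneously, for each $m$, let $\uu_m$ be a unit normal to $\mathrm{span}(\ue_{m+1},\dots,\ue_{m+n-1})$ chosen with consistent orientation.

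The central analytic step is to show that $(\uu_m)$ is Cauchy. Using the angle identity
\[
 \sin\angle(\uu_{m-1},\uu_m)=\frac{|\ue_{m+n-1}\cdot\uu_{m-1}|}{|\pi(\ue_{m+n-1})|}
\]
in the $2$-plane perpendicular to $\mathrm{span}(\ue_{m+1},\dots,\ue_{m+n-2})$, together with the recursive shape of $\ue_{m+n-1}$ and the fact that $\uu_{m-1}$ annihilates $\ue_m,\dots,\ue_{m+n-2}$, the numerator collapses to $a\,(\ue_{m-1}\cdot\uu_{m-1})$, which in turn unfolds by a similar telescope to earlier differences $\|\uu_k-\uu_{k-1}\|$. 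The mesh condition $X_{i+1}-X_i\ge\log 4$ then drives a recursion for these differences with a contraction ratio strictly less than $1$, ensuring summability and convergence to a limit $\uu$. Back-substitution yields $\log|\ue_i\cdot\uu|=-S_i+O(1)$.

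To conclude, verify $\|\uP(q)-\uL_\uu(q)\|_\infty\le 2n^2$. On $[q_i,q_{i+1}]$, the $n$ linearly independent witnesses $\ue_i,\dots,\ue_{i+n-1}$ yield the component-wise upper estimate $\log\lambda_j(\cC_\uu(e^q))\le P_j(q)+O(n)$. The matching lower bound follows from Minkowski's second theorem applied to $\cC_\uu(e^q)$: its volume is constant in $q$, so $\sum_j\log\lambda_j(\cC_\uu(e^q))=O(1)$; combined with the $(n,0)$-system identity $\sum_j P_j(q)\equiv 0$ (checked at each $q_i$ and propagated via the vanishing total slope), the component-wise upper bounds transfer into component-wise lower bounds of matching order. \emph{The main obstacle} is the inductive construction itself: one must simultaneously arrange the correct norm, the appropriate projection behaviour, and linear independence, while keeping the rounding errors small enough for the telescope to contract; the mesh threshold $\log 4$ is precisely what is needed to make this contraction uniform in $i$, so that the total error stays bounded by $2n^2$ depending only on $n$.
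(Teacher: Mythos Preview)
Your overall plan matches the paper's --- build integer points with $\log\|\ue_i\|\approx X_i$, obtain a limiting unit vector $\uu$ with $\log|\ue_i\cdot\uu|\approx -S_i$, then combine the witness bounds with Minkowski's second theorem and the identity $\sum_j P_j\equiv 0$ to get two-sided control (this last step is the paper's Lemma~\ref{lem4}). The gap is in the claimed contraction. With $\ue_i=a\,\ue_{i-n}+\sum_j b_j\ue_{i-n+j}$ and $a\approx e^{X_i-X_{i-n}}$, your telescope gives
\[
 \dist(\uu_{m-1},\uu_m)
   \ \le\ \frac{a_{m+n-1}\,\|\ue_{m-1}\|}{|\pi(\ue_{m+n-1})|}\,\|\uu_{m-1}-\uu_{m-2}\|,
\]
but $a_{m+n-1}\|\ue_{m-1}\|\approx e^{X_{m+n-1}}\ge |\pi(\ue_{m+n-1})|$, so the factor is at least of order $1$ \emph{independently of the mesh}: there is no contraction. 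Concretely, with your initial $\ue_1,\dots,\ue_n$ taken as scalings of the standard basis, the first normals $\uu_0,\dots,\uu_{n-1}$ are themselves standard basis vectors, so $\dist(\uu_{m-1},\uu_m)=1$ at the outset, and the recursion never drives it down. The underlying reason is that your recursion forces $|\det(\ue_{i-n+1},\dots,\ue_i)|$ to grow like $\prod_j a_j$, so the new point lands on a hyperplane far from $\langle\ue_{i-n+1},\dots,\ue_{i-1}\rangle_\bR$ and the successive normals swing by an amount that does not shrink.

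The paper's recursive step (Lemma~\ref{lem2}) differs precisely on this point. The new point $\ux_{i+n-1}$ is placed on the \emph{nearest} integer-containing hyperplane parallel to $U_i=\langle\ux_i,\dots,\ux_{i+n-2}\rangle_\bR$, then displaced by about $(3/2)A_{i+n-1}$ along a direction $\uv\in U_i$ orthogonal to $\langle\ux_{i+1},\dots,\ux_{i+n-2}\rangle_\bR$, and finally rounded using the basis of $U_i\cap\bZ^n$. This keeps $|\det(\ux_i,\dots,\ux_{i+n-1})|=1$ at every stage, hence $|\ux_i\cdot\uu_{i+1}|=1/H(U_{i+1})$ exactly, and yields directly $\dist(\uu_i,\uu_{i+1})\le 1/(A_{i+n-1}H(U_i))$, which is of order $(A_i\cdots A_{i+n-1})^{-1}$. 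Only now does the mesh condition $A_{j+1}\ge 4A_j$ enter, guaranteeing $H(U_{i+1})\ge 2H(U_i)$ and hence summability of these increments. The decay is thus built into the geometry of the construction rather than extracted from a self-referential recursion on the angles.
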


To say a word about the proof, recall that each convex body $\cC$
of $\bR^n$ induces a \emph{distance function} on $\bR^n$.  It is
the map from $\bR^n$ to $[0,\infty)$ which assigns to each point
$\ux$ of $\bR^n$ the smallest real number $\lambda\ge 0$, denoted
$\lambda(\ux,\cC)$, such that $\ux\in\lambda\cC$ (see \cite[\S1.3]{GL1987}).
Usually, $\cC$ is fixed and $\ux$ varies.  Here, the situation
is reversed.  The point $\ux\in\bR^n$ is fixed and we let
the convex body $\cC$ vary within the family $\cC_\uu(Q)$ with
$Q\ge 1$, for some unit vector $\uu$ of $\bR^n$.  In view of the
definition of $\cC_\uu(Q)$, we have
\begin{equation}
 \label{intro:eq:lambda(x,C)}
 \lambda(\ux,\cC_\uu(Q))
  = \max\big\{ \|\ux\|Q^{-1},\, |\ux\cdot\uu| Q^{n-1} \big\}
  \quad
  (Q\ge 1).
\end{equation}
Suppose that the coordinates of $\uu$ are linearly independent
over $\bQ$ and that $\ux\in\bZ^n\setminus\{0\}$.  Then, we have
$0<|\ux\cdot\uu|<\|\ux\|$ and we define a map
$L_\ux\colon[0,\infty)\to\bR$ by
\begin{align*}
 L_\ux(q)
   :=L(\ux,q)
   := &\log\lambda(\ux,\cC_\uu(e^q)) \\
    = &\max\big\{ \log\|\ux\|-q,\, \log|\ux\cdot\uu|+(n-1)q \big\}
 \quad
 (q\ge 0).
\end{align*}
Its graph is a polygon with two sides: a line segment of slope
$-1$ followed by an half-line with slope $n-1$.  The function
$L_\ux$ is continuous and has a local minimum at the point where its
graph changes slope from $-1$ to $n-1$. Although $\ux$ is fixed,
we say that $L_\ux$, or its graph, represents the \emph{trajectory}
of the point $\ux$ with respect to the varying family of convex
bodies $\cC_\uu(Q)$.  Clearly, this trajectory is uniquely determined
by its local minimum.  It is not difficult to show that the combined graph of
$\uL_\uu$ above any compact interval is covered by the trajectories
of finitely many non-zero integer points (see \cite[\S4]{SS2009}).

Now, let $\uP\colon[q_1,\infty)\to\Delta_n$ be a quasi-regular
$(n,0)$-system.  In the notation
of Definition \ref{def}, we can imagine its combined graph covered
by the trajectories of
a sequence of ``ideal points'' $\ux^*_i$ having local minima
at $(q_i,P_1(q_i))$.  Figure 1 shows the trajectory
of such an ideal point $\ux^*_3$.  In general, we cannot hope
for such points to exist.
Instead, we construct a sequence $(\ux_i)_{i\ge 1}$ of
integer points and a unit vector $\uu$ such that, for each
$i\ge1$, the trajectory of $\ux_i$ is close
to ideal and moreover the $n$-tuple
$(\ux_i,\dots,\ux_{i+n-1})$ is a basis of $\bZ^n$.
In practice, the vector $\uu$ is also constructed as a
limit of unit vectors $\uu_i$ where $\uu_i$ is perpendicular
to $\ux_{i},\dots,\ux_{i+n-2}$ for each $i\ge 1$.  Then, it
suffices to choose the sequence $(\ux_i)_{i\ge 1}$ so that the trajectory
of $\ux_i$ with respect to the family $\cC_{\uu_{i+1}}(Q)$ is close to ideal.
To this end, we require $\uP$ to have mesh at least $\log 4$.
This allows us to control appropriately the norms of the points
$\ux_i$ as well as the angles that they make with respect to
certain subspaces.

%
%

\section{Almost orthogonal sequences}

For each $k=1,\dots,n$, we endow $\bigwedge^k\bR^n$ with
the Euclidean space structure characterized by the property that,
for any orthonormal basis $(\ue_1,\dots,\ue_n)$ of $\bR^n$, the
products $\ue_{j_1}\wedge\cdots\wedge\ue_{j_k}$ with $1\le
j_1<\cdots<j_k\le n$ form an orthonormal basis of
$\bigwedge^k\bR^n$.  We denote by $\|\up\|$ the
associated norm of an element $\up$ of $\bigwedge^k\bR^n$.
We also denote by $\bigwedge^k\bZ^n$ the lattice of
$\bigwedge^k\bR^n$ of co-volume $1$ spanned by the
products $\ux_1\wedge\cdots\wedge\ux_k$ with $\ux_1,\dots,\ux_k
\in \bZ^n$.

The projective distance between two non-zero points
$\ux,\uy$ of $\bR^n$ is
\[
 \dist(\ux,\uy):=\frac{\|\ux\wedge\uy\|}{\|\ux\|\,\|\uy\|}.
\]
It depends only on the classes of $\ux$ and $\uy$ in
$\bP^{n-1}(\bR)$ and represents the sine of the angle between
the one-dimensional subspaces of $\bR^n$ spanned by $\ux$ and $\uy$.
This function induces a metric on $\bP^{n-1}(\bR)$ (satisfying
the triangle inequality) and $\bP^{n-1}(\bR)$ is complete with
respect to that metric.

Given a point $\ux$ of $\bR^n$ and a subspace $U$
of $\bR^n$, we denote by $U^\perp$ the orthogonal complement
of $U$ and by $\proj_U(\ux)$ the orthogonal projection of
$\ux$ on $U$.  If $\ux$ is non-zero, we also define
\[
 \dist(\ux,U):=\frac{\|\proj_{U^\perp}(\ux)\|}{\|\ux\|}.
\]
The next lemma connects the two notions of distance.

\begin{lemma}
\label{lem0}
If $\ux$ is a non-zero point of $\bR^n$, and if $U$ is a non-zero
proper subspace of $\bR^n$ with basis $(\uy_1,\dots,\uy_k)$, then
\[
 \dist(\ux,U)
  =\frac{\|\ux\wedge\uy_1\wedge\cdots\wedge\uy_k\|}
        {\|\ux\|\,\|\uy_1\wedge\cdots\wedge\uy_k\|}
  = \min\{\dist(\ux,\uy)\,;\,\uy\in U\setminus\{0\}\,\}.
\]
\end{lemma}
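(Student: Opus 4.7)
The plan is to introduce the orthogonal decomposition $\ux=\uv+\uw$ with $\uv=\proj_U(\ux)\in U$ and $\uw=\proj_{U^\perp}(\ux)\in U^\perp$, and then reduce both equalities to this splitting. By the very definition of $\dist(\ux,U)$, the middle expression in the conclusion equals $\|\uw\|/\|\ux\|$, so the task is simply to show that the wedge quotient and the minimum both equal $\|\uw\|/\|\ux\|$.

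For the first equality I would argue that $\uv$ is a linear combination of $\uy_1,\dots,\uy_k$, so $\uv\wedge\uy_1\wedge\cdots\wedge\uy_k=0$, and therefore
\[
 \ux\wedge\uy_1\wedge\cdots\wedge\uy_k
  = \uw\wedge\uy_1\wedge\cdots\wedge\uy_k.
\]
Since $\uw$ is orthogonal to each $\uy_j$, the Gram matrix of the system $(\uw,\uy_1,\dots,\uy_k)$ is block-diagonal with blocks $(\|\uw\|^2)$ and the Gram matrix of $(\uy_1,\dots,\uy_k)$. Taking determinants and square roots yields
\[
 \|\uw\wedge\uy_1\wedge\cdots\wedge\uy_k\|
  = \|\uw\|\,\|\uy_1\wedge\cdots\wedge\uy_k\|,
\]
and dividing by $\|\ux\|\,\|\uy_1\wedge\cdots\wedge\uy_k\|$ gives $\|\uw\|/\|\ux\|=\dist(\ux,U)$, as wanted.

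For the second equality I would use the Lagrange identity $\|\ux\wedge\uy\|^2=\|\ux\|^2\|\uy\|^2-(\ux\cdot\uy)^2$, so
\[
 \dist(\ux,\uy)^2 = 1-\frac{(\ux\cdot\uy)^2}{\|\ux\|^2\|\uy\|^2}.
\]
For $\uy\in U$ we have $\uw\cdot\uy=0$, hence $\ux\cdot\uy=\uv\cdot\uy$, and Cauchy--Schwarz yields $(\ux\cdot\uy)^2\le\|\uv\|^2\|\uy\|^2$, with equality when $\uy$ is a non-zero scalar multiple of $\uv$ (if $\uv=0$, any $\uy\in U\setminus\{0\}$ gives equality with both sides $0$). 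Therefore
\[
 \dist(\ux,\uy)^2 \;\ge\; 1-\frac{\|\uv\|^2}{\|\ux\|^2} \;=\; \frac{\|\uw\|^2}{\|\ux\|^2},
\]
with the lower bound attained at $\uy=\uv$ when $\uv\neq 0$, and trivially attained otherwise. This identifies $\|\uw\|/\|\ux\|$ as the minimum, closing the chain.

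The only non-trivial step is the Gram-determinant identity underlying $\|\uw\wedge\uy_1\wedge\cdots\wedge\uy_k\|=\|\uw\|\,\|\uy_1\wedge\cdots\wedge\uy_k\|$; it is a classical calculation but is what allows the orthogonal decomposition to interact cleanly with the exterior powers, and everything else is immediate from Cauchy--Schwarz and Lagrange's identity.
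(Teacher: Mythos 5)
Your proof is correct. The first equality is established by essentially the same route as the paper: both arguments hinge on the fact that wedging with $\uy_1,\dots,\uy_k$ annihilates the component of $\ux$ lying in $U$, so that $\ux\wedge\uy_1\wedge\cdots\wedge\uy_k=\proj_{U^\perp}(\ux)\wedge\uy_1\wedge\cdots\wedge\uy_k$, and the latter norm factors because $\proj_{U^\perp}(\ux)$ is orthogonal to each $\uy_j$. The paper simply states this chain of equalities; you spell out the factorization via the block-diagonal Gram matrix, which is exactly the justification the paper leaves implicit.

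For the second equality the two proofs diverge. The paper uses a monotonicity argument: for subspaces $V\subset U$ one has $\proj_{U^\perp}(\ux)=\proj_{U^\perp}(\proj_{V^\perp}(\ux))$, hence $\dist(\ux,U)\le\dist(\ux,V)$, and in particular $\dist(\ux,U)\le\dist(\ux,\uy)$ for any non-zero $\uy\in U$; equality is then exhibited at $\uy=\proj_U(\ux)$ by observing that $\ux$ has the same projection onto $U^\perp$ as onto $\langle\uy\rangle_\bR^\perp$, with a separate check when $\ux\in U^\perp$. You instead compute $\dist(\ux,\uy)^2$ directly via the Lagrange identity, reduce $\ux\cdot\uy$ to $\uv\cdot\uy$ using the orthogonal splitting, and apply Cauchy--Schwarz with its equality case. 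Both are sound; your computation is a bit more algebraic and avoids the subspace-inclusion lemma, while the paper's argument is more geometric and also yields, as a byproduct it explicitly records, the monotonicity $\dist(\ux,U)\le\dist(\ux,V)$ for $V\subset U$, a fact it reuses elsewhere (for instance inside the proof of Lemma~\ref{lem2}).
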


\begin{proof} The first formula follows from the definition using
\[
 \|\ux\wedge\uy_1\wedge\cdots\wedge\uy_k\|
 =\|\proj_{U^\perp}(\ux)\wedge\uy_1\wedge\cdots\wedge\uy_k\|
 =\|\proj_{U^\perp}(\ux)\|\,\|\uy_1\wedge\cdots\wedge\uy_k\|.
\]
It implies in particular that $\dist(\ux,\uy) =
\dist(\ux,\langle\uy\rangle_\bR)$ for any $\uy \in
\bR^n\setminus\{0\}$.
To prove the second equality of the lemma, we first note
that, for any subspace $V$ of $U$, we have $\proj_{U^\perp}(\ux)=
\proj_{U^\perp}(\proj_{V^\perp}(\ux))$ and so
$\dist(\ux,U) \le \dist(\ux,V)$.  In particular, this implies
that $\dist(\ux,U)\le \dist(\ux,\uy)$ for any $\uy\in
U\setminus\{0\}$.  If $\ux\notin U^\perp$, then
$\uy:=\proj_U(\ux)$ is a non-zero element of $U$
with $\dist(\ux,U)=\dist(\ux,\uy)$ because $\ux$ has the
same orthogonal projection on $U^\perp$ as on
$\langle\uy\rangle_\bR^\perp$.  Thus the second equality
holds in that case. If $\ux\in U^\perp$, then it still holds
because $\dist(\ux,U)=1=\dist(\ux,\uy)$
for any $\uy\in U\setminus\{0\}$.
\end{proof}

\begin{definition}
We say that a sequence $(\ux_1,\dots,\ux_k)$ of vectors of
$\bR^n$ is \emph{almost orthogonal} if it is linearly independent
and if
\[
 \dist(\ux_j,\langle\ux_1,\dots,\ux_{j-1}\rangle_\bR)\ge 1/2
 \quad
 (2\le j \le k).
\]
\end{definition}

By Lemma \ref{lem0}, it follows that any subsequence of
an almost orthogonal sequence is almost orthogonal.  Moreover,
if $(\ux_1,\dots,\ux_k)$ is almost orthogonal, then
\[
 \|\ux_1\wedge\cdots\wedge\ux_k\|
 = \|\ux_1\|
   \prod_{j=2}^k
   \left( \|\ux_j\|\,
         \dist\big(\ux_j,\langle\ux_1,\dots,\ux_{j-1}\rangle_\bR\big)
   \right)
 \ge 2^{-(k-1)} \|\ux_1\|\cdots\|\ux_k\|.
\]
Note that in \cite{R_preprint}, we use a stronger
notion of almost orthogonality.

We say that an element $\ux$ of $\bZ^n$ is \emph{primitive} if
it is non-zero and if its coordinates are relatively prime as
a set.  More generally, we say that a $k$-tuple $(\ux_1,\dots,\ux_k)$
of elements of $\bZ^n$ is \emph{primitive} if $\ux_1 \wedge \cdots
\wedge\ux_k$ is non-zero and if its coordinates with respect to
a basis of $\bigwedge^k\bZ^n$ are relatively prime.  This condition
is equivalent to asking that $(\ux_1,\dots,\ux_k)$ can be extended to a
basis $(\ux_1,\dots,\ux_n)$ of $\bZ^n$.  In particular, it requires
that $1\le k\le n$.

Finally, we say that a non-zero subspace $U$ of $\bR^n$ is
\emph{defined over $\bQ$} if it is spanned by elements
of $\bQ^n$.  Following Schmidt in \cite{Sc1967}, we then define
the \emph{height} of $U$ by
\[
 H(U) = \|\ux_1\wedge\cdots\wedge\ux_k\|
\]
where $(\ux_1,\dots,\ux_k)$ is any basis of $U\cap\bZ^n$.
This is independent of the choice of the basis.  The next result
summarizes some of the above considerations.

\begin{lemma}
 \label{lem1}
Let $(\ux_1,\dots,\ux_{n-1})$ be an almost orthogonal primitive
$(n-1)$-tuple of points of $\bZ^n$ and let
$U:=\langle\ux_1,\dots,\ux_{n-1}\rangle_\bR$.  Then, we have
\[
 2^{-(n-2)}\|\ux_1\|\cdots\|\ux_{n-1}\|
 \le H(U)
 \le \|\ux_1\|\cdots\|\ux_{n-1}\|.
\]
\end{lemma}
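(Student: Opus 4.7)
The plan is to observe that both bounds follow essentially immediately from material already developed in the excerpt: the upper bound is Hadamard, the lower bound is the almost-orthogonality estimate already recorded after the definition.

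First I would argue that, because $(\ux_1,\dots,\ux_{n-1})$ is primitive, it extends to a basis of $\bZ^n$. Its first $n-1$ vectors then form a basis of $U\cap\bZ^n$, so by the definition of height we get $H(U)=\|\ux_1\wedge\cdots\wedge\ux_{n-1}\|$. This converts the inequalities to be proved into inequalities about the norm of the wedge product.

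Next, for the upper bound, I would appeal to Hadamard's inequality $\|\ux_1\wedge\cdots\wedge\ux_{n-1}\|\le\|\ux_1\|\cdots\|\ux_{n-1}\|$, which is immediate from the formula $\|\ux_1\wedge\cdots\wedge\ux_{n-1}\|=\|\ux_1\|\prod_{j=2}^{n-1}\|\ux_j\|\,\dist(\ux_j,\langle\ux_1,\dots,\ux_{j-1}\rangle_\bR)$ combined with the trivial bound $\dist(\ux_j,\langle\ux_1,\dots,\ux_{j-1}\rangle_\bR)\le 1$. That formula itself is an iterated application of Lemma \ref{lem0} together with the definition of $\dist(\ux,U)$.

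For the lower bound, I would use the same identity, but this time insert the hypothesis of almost orthogonality, which gives $\dist(\ux_j,\langle\ux_1,\dots,\ux_{j-1}\rangle_\bR)\ge 1/2$ for $2\le j\le n-1$. Multiplying the $n-2$ such factors yields $\|\ux_1\wedge\cdots\wedge\ux_{n-1}\|\ge 2^{-(n-2)}\|\ux_1\|\cdots\|\ux_{n-1}\|$, as required. There is no real obstacle here; the only point to verify carefully is that the primitivity hypothesis is genuinely needed so that $H(U)$ coincides with $\|\ux_1\wedge\cdots\wedge\ux_{n-1}\|$ rather than being a proper divisor of it — without primitivity only the inequality $H(U)\mid\|\ux_1\wedge\cdots\wedge\ux_{n-1}\|$ would hold and the stated bounds could fail.
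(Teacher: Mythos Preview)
Your proposal is correct and matches the paper's approach exactly: the paper does not even write out a separate proof, stating only that the lemma ``summarizes some of the above considerations,'' namely the identity $\|\ux_1\wedge\cdots\wedge\ux_k\|=\|\ux_1\|\prod_{j=2}^k\|\ux_j\|\dist(\ux_j,\langle\ux_1,\dots,\ux_{j-1}\rangle_\bR)$, the almost-orthogonality lower bound $2^{-(k-1)}\|\ux_1\|\cdots\|\ux_k\|$, and the fact that primitivity forces $H(U)=\|\ux_1\wedge\cdots\wedge\ux_{n-1}\|$. Your argument spells out precisely those ingredients.
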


We conclude this section with a particular construction of
almost orthogonal sequences.  It will serve as the initial step for
a recursive construction of integer points in the next section.

\begin{lemma}
\label{lem3}
Let $(\ue_1,\dots,\ue_n)$ denote the canonical basis of $\bZ^n$
and let $B_1,\dots,B_{n-1}\in\bZ$ with $B_i\ge 2^{i-1}$
for $i=1,\dots,n-1$.  Set
\[
 \ux_i=B_i\ue_i+\ue_{i+1}
 \quad
 (i=1,\dots,n-1).
\]
Then $(\ux_1,\dots,\ux_{n-1})$ is an almost orthogonal primitive
$(n-1)$-tuple of integer points.
\end{lemma}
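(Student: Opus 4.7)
The plan is to establish the three parts of the definition---linear independence, primitivity, and the lower bounds $\dist(\ux_j,\langle\ux_1,\dots,\ux_{j-1}\rangle_\bR)\ge 1/2$ for $j=2,\dots,n-1$---from a single explicit computation of the wedge products $\ux_1\wedge\cdots\wedge\ux_j$ in the standard basis of $\bigwedge^j\bZ^n$.

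Expanding $\ux_1\wedge\cdots\wedge\ux_j=(B_1\ue_1+\ue_2)\wedge\cdots\wedge(B_j\ue_j+\ue_{j+1})$ by multilinearity gives $2^j$ terms indexed by the subset $S\subseteq\{1,\dots,j\}$ of positions at which one picks $B_i\ue_i$ instead of $\ue_{i+1}$. Each term is a wedge of $j$ standard basis vectors and vanishes unless those vectors are distinct. A brief combinatorial check shows this happens exactly when $S$ is an initial segment $\{1,\dots,m\}$: otherwise some $i$ satisfies $i\notin S$ and $i+1\in S$, so positions $i,i+1$ both contribute $\ue_{i+1}$. The surviving terms are already in standard increasing order, so no sign correction is needed, and one obtains
\[
 \ux_1\wedge\cdots\wedge\ux_j
  =\sum_{m=0}^{j}(B_1\cdots B_m)\,
    \ue_1\wedge\cdots\wedge\ue_m\wedge\ue_{m+2}\wedge\cdots\wedge\ue_{j+1}
\]
(with the convention that empty products equal $1$), and consequently $\|\ux_1\wedge\cdots\wedge\ux_j\|^2=\sum_{m=0}^{j}(B_1\cdots B_m)^2$. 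Taking $j=n-1$, the $m=0$ term contributes $\ue_2\wedge\cdots\wedge\ue_n$ with coefficient $1$; so $\ux_1\wedge\cdots\wedge\ux_{n-1}$ is non-zero and the gcd of its coordinates in the standard basis of $\bigwedge^{n-1}\bZ^n$ equals $1$, which settles both linear independence and primitivity.

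For the distance inequality at index $j$, Lemma \ref{lem0} reduces it to the equivalent statement $4\|\ux_1\wedge\cdots\wedge\ux_j\|^2\ge\|\ux_j\|^2\,\|\ux_1\wedge\cdots\wedge\ux_{j-1}\|^2$. Setting $V_k^2:=\sum_{m=0}^k(B_1\cdots B_m)^2$ and using $\|\ux_j\|^2=B_j^2+1$, this rearranges to $(B_j^2-3)V_{j-1}^2\le 4B_j^2(B_1\cdots B_{j-1})^2$. Since $j\ge 2$ forces $B_j\ge 2^{j-1}\ge 2$, the coefficient $4B_j^2/(B_j^2-3)$ is at least $4$, so it suffices to prove the uniform bound $V_{j-1}^2\le 4(B_1\cdots B_{j-1})^2$, equivalently $\sum_{m=0}^{j-2}(B_{m+1}\cdots B_{j-1})^{-2}\le 3$. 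Using $B_i\ge 2^{i-1}$, this sum is dominated by $1/B_1^2\le 1$ when $j=2$, and by $1/B_{j-1}^2\le 4^{-(j-2)}$ when $j\ge 3$, with earlier terms decaying super-geometrically; it is therefore bounded by $1$, well below the required $3$. I expect the main technical point to be the combinatorial identification of the non-vanishing terms in the wedge expansion; once the formula for $\|\ux_1\wedge\cdots\wedge\ux_j\|^2$ is in hand, everything else is a routine calculation.
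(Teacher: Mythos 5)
Your proof is correct, and it proceeds by a more explicit computation than the paper's. Where the paper establishes the lower bound $\|\ux_1\wedge\cdots\wedge\ux_k\|\ge B_1\cdots B_k$ indirectly---by noting $\ux_1\wedge\cdots\wedge\ux_k\wedge\ue_{k+1}=B_1\cdots B_k\,\ue_1\wedge\cdots\wedge\ue_{k+1}$ and invoking Hadamard---you compute the full wedge expansion and obtain the exact identity $\|\ux_1\wedge\cdots\wedge\ux_j\|^2=\sum_{m=0}^j(B_1\cdots B_m)^2$. Likewise, the paper shows primitivity in one line by observing that $(\ue_1,\ux_1,\dots,\ux_{n-1})$ is a basis of $\bZ^n$, whereas you read it off from the coefficient $1$ of $\ue_2\wedge\cdots\wedge\ue_n$ in the expansion; both are clean, the paper's being slightly quicker. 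For the distance estimate, the paper bounds $\|\ux_1\wedge\cdots\wedge\ux_{k-1}\|\,\|\ux_k\|$ from above by $\prod_i\sqrt{1+B_i^2}\le 2\prod_i B_i$ using $B_i\ge 2^{i-1}$, and combines it with the lower bound above, while you carry the exact $V_k^2$ formula through and reduce to the tail estimate $\sum_{m=0}^{j-2}(B_{m+1}\cdots B_{j-1})^{-2}\le 3$. The growth hypothesis on the $B_i$ enters in both proofs, only packaged differently (as a norm inflation bound in the paper, as a tail bound in yours). One small remark on phrasing: your claim that the tail sum ``is dominated by $1/B_{j-1}^2$'' should be understood loosely (the sum has several terms), but since each term is at most $4^{-(j-2)}$ and there are $j-1$ of them, the bound $\le 1\le 3$ is indeed immediate; nothing is missing.
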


\begin{proof}
We first note that $(\ue_1,\ux_1,\dots,\ux_{n-1})$ is a basis of
$\bZ^n$ and therefore $(\ux_1,\dots,\ux_{n-1})$ is primitive. Let $k$
be an integer with $2\le k\le n-1$.  Since
\[
 \ux_1\wedge\cdots\wedge\ux_k\wedge\ue_{k+1}
  = B_1\cdots B_k\ue_1\wedge\cdots\wedge\ue_{k+1},
\]
we must have $\|\ux_1\wedge\cdots\wedge\ux_k\| \ge B_1\cdots B_k$.
As we also have
\[
 \|\ux_1\wedge\cdots\wedge\ux_{k-1}\|\, \|\ux_k\|
  \le \prod_{i=1}^k \|\ux_i\|
   = \prod_{i=1}^k \sqrt{1+B_i^2}
  \le \prod_{i=1}^k \Big(B_i\exp\Big(\frac{1}{2B_i^2}\Big)\Big)
  \le 2\prod_{i=1}^k B_i\,,
\]
we conclude from Lemma \ref{lem0} that
$\dist(\ux_k,\langle\ux_1,\dots,\ux_{k-1}\rangle_\bR)
  \ge 1/2$.
This shows that the sequence $(\ux_1,\dots,\ux_{n-1})$ is almost
orthogonal.
\end{proof}

%
%

\section{A recursive construction of points}

The next lemma is the key to a recursive construction of
points in $\bZ^n$ which is at the heart of the proof of our
main theorem.

\begin{lemma}
 \label{lem2}
Let $(\uy_1,\dots,\uy_{n-1})$ be an almost orthogonal primitive
$(n-1)$-tuple of points of $\bZ^n$ and let $A$ be a real number
with $A \ge 2+\|\uy_1\|+\cdots+\|\uy_{n-1}\|$.
Then, there exists a point $\uy_n\in\bZ^n$ with the following
properties
\begin{itemize}
 \item[1)] $A \le \|\uy_n\| \le 2A$,
 \item[2)] $(\uy_1,\uy_2,\dots,\uy_n)$ is a basis of $\bZ^n$,
 \item[3)] $(\uy_2,\dots,\uy_n)$ is almost orthogonal,
 \item[4)] if $\uu$ is a unit vector perpendicular to
   $U:=\langle\uy_1,\dots,\uy_{n-1}\rangle_\bR$,
   and if $\uu'$ is a unit vector perpendicular
   to $U':=\langle\uy_2,\dots,\uy_n\rangle_\bR$, then
   \[
    \dist(\uu,\uu') \le \frac{1}{A\,H(U)}
    \et
    |\uy_1\cdot\uu'| = \frac{1}{H(U')}.
   \]
\end{itemize}
\end{lemma}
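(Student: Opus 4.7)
The plan is to use the primitivity of $(\uy_1,\dots,\uy_{n-1})$ to extend it to a basis $(\uy_1,\dots,\uy_{n-1},\uv)$ of $\bZ^n$, and then search for $\uy_n$ of the form
\[
  \uy_n \;=\; \uv + M\uy_1 + \sum_{i=2}^{n-1} c_i\uy_i
  \qquad (M,c_i\in\bZ).
\]
Any such choice preserves the determinant, so $|\det(\uy_1,\dots,\uy_n)|=1$ and property (2) comes for free. To analyze the remaining properties I set $V:=\langle\uy_2,\dots,\uy_{n-1}\rangle_\bR$, pick a unit vector $\uw_0$ in the one-dimensional space $V^\perp\cap U$, and work in the orthogonal decomposition $\bR^n=V\oplus\bR\uw_0\oplus\bR\uu$. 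With $\nu:=\uy_n\cdot\uu$ and $\mu:=\uy_n\cdot\uw_0$, the inner product $\nu=\uv\cdot\uu$ is independent of $M$ and the $c_i$, and $\|\uy_1\wedge\cdots\wedge\uy_{n-1}\wedge\uv\|=H(U)\,|\uv\cdot\uu|=1$ forces $|\nu|=1/H(U)\le 1$.

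To choose $M$, note that $\uy_1\notin V$ forces $t:=\uy_1\cdot\uw_0\ne 0$ with $|t|\le\|\uy_1\|$. The quantity $\mu=\uv\cdot\uw_0+Mt$ runs through an arithmetic progression of step $|t|$ as $M\in\bZ$ varies, so I take the unique $M$ for which $|\mu|\in[A,\,A+|t|)\subseteq[A,\,A+\|\uy_1\|)$. Once $M$ is fixed, I expand
\[
  \proj_V(\uv)+M\proj_V(\uy_1) \;=\; \sum_{i=2}^{n-1}\alpha_i\uy_i
\]
in the basis $(\uy_2,\dots,\uy_{n-1})$ of $V$, and round, choosing $c_i\in\bZ$ so that $|\alpha_i+c_i|\le 1/2$. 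The $V$-component of $\uy_n$ is then $\sum(\alpha_i+c_i)\uy_i$, of norm at most $\tfrac12(\|\uy_2\|+\cdots+\|\uy_{n-1}\|)$.

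The orthogonal decomposition gives $\|\uy_n\|^2=\|\proj_V(\uy_n)\|^2+\mu^2+\nu^2$, so the lower bound $\|\uy_n\|\ge|\mu|\ge A$ is immediate. For the upper bound, the hypothesis $A\ge 2+\|\uy_1\|+\cdots+\|\uy_{n-1}\|$ combined with the estimates above yields
\[
  \|\uy_n\|^2 \;\le\; (A+\|\uy_1\|)^2+\tfrac14(\|\uy_2\|+\cdots+\|\uy_{n-1}\|)^2+1,
\]
and an elementary calculation (worst case $\|\uy_1\|=A-2$) bounds the right-hand side by $4A^2$, giving property (1). For (3), the subsequence $(\uy_2,\dots,\uy_{n-1})$ is almost orthogonal by monotonicity of $\dist$ in subspaces (Lemma~\ref{lem0}), and $\dist(\uy_n,V)=\sqrt{\mu^2+\nu^2}/\|\uy_n\|\ge|\mu|/\|\uy_n\|\ge A/(2A)=1/2$.

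For (4), both $\uu$ and $\uu'$ lie in the two-dimensional space $V^\perp=\bR\uw_0\oplus\bR\uu$, so writing $\uu'=(\cos\theta)\uu+(\sin\theta)\uw_0$ gives $\dist(\uu,\uu')=|\sin\theta|$, and the condition $\uu'\cdot\uy_n=\nu\cos\theta+\mu\sin\theta=0$ yields
\[
  \dist(\uu,\uu') \;=\; \frac{|\nu|}{\sqrt{\mu^2+\nu^2}} \;\le\; \frac{|\nu|}{|\mu|} \;\le\; \frac{1}{A\,H(U)}.
\]
Since $(\uy_1,\dots,\uy_n)$ is a basis of $\bZ^n$, its tail $(\uy_2,\dots,\uy_n)$ is a basis of $U'\cap\bZ^n$, and the analogous Gram-determinant computation gives $1=|\det(\uy_1,\dots,\uy_n)|=H(U')\,|\uy_1\cdot\uu'|$, hence $|\uy_1\cdot\uu'|=1/H(U')$. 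I expect the tightest step to be the norm bound in (1): the overshoot $|t|\le\|\uy_1\|$ from the $M$-step and the rounding error $\tfrac12\sum_{i\ge 2}\|\uy_i\|$ from the $c_i$-step together nearly exhaust the slack provided by the hypothesis on $A$.
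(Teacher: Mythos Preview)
Your argument is correct and follows essentially the same geometric skeleton as the paper's proof: both work in the orthogonal splitting $\bR^n=V\oplus\bR\uw_0\oplus\bR\uu$ (the paper calls your $\uw_0$ by $\uv$), force the $\uu$-component of $\uy_n$ to be $1/H(U)$, push the $\uw_0$-component to be of size about $A$, and kill the $V$-component by rounding coefficients; properties (3) and (4) are then read off exactly as you do.

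The one genuine difference is in how $\uy_n$ is produced. The paper does not first pick an integer vector completing the basis and then adjust coefficients; instead it picks the \emph{real} target point $\uy=H(U)^{-1}\uu+\tfrac32 A\,\uw_0$ on the closest lattice-bearing hyperplane parallel to $U$, and then invokes the fact that some translate $\uy+\sum\epsilon_i\uy_i$ with all $|\epsilon_i|\le 1/2$ lies in $\bZ^n$. This bounds the $\uy_1$-coefficient by $1/2$ simultaneously with the others, so one gets the clean estimate $\|\uy_n-\tfrac32 A\,\uw_0\|\le 1+\tfrac12\sum_{i=1}^{n-1}\|\uy_i\|\le A/2$, whence $A\le\|\uy_n\|\le 2A$ drops out immediately. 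Your route, adjusting $M$ first, only pins $|\mu|$ to within $|t|\le\|\uy_1\|$ of $A$, which is why your upper bound in (1) needs the extra optimization over $(a,b)$. Both work under the stated hypothesis on $A$; the paper's version just has a little more slack. (Minor point: your $M$ with $\mu\in[A,A+|t|)$ is unique, but if you really want $|\mu|$ in that interval there could be two choices---this does not matter since you only need existence.)
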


\begin{proof}
Let $U$ and $\uu$ be as in the condition 4). We define
$V=\langle\uy_2,\dots,\uy_{n-1}\rangle_\bR$, and choose
a unit vector $\uv$ of $U$ which is perpendicular to $V$.
Then $(\uu,\uv)$ is an orthonormal basis for $V^\perp$.

The hyperplane $H(U)^{-1}\uu+U$ is a closest translate of $U$
which contains a point of $\bZ^n$ not in $U$.  For any point
$\uy$ of this hyperplane, we have $|\det(\uy_1,\dots,\uy_{n-1},\uy)|=1$
and there exist $\epsilon_1,\dots,\epsilon_{n-1} \in [-1/2,1/2]$
such that
\[
 \uy+\epsilon_1\uy_1+\cdots+\epsilon_{n-1}\uy_{n-1}
 \in \bZ^n.
\]
We apply this to the point $\uy=H(U)^{-1}\uu+(3/2)A\uv$.  This yields
an integer point
\[
 \uy_n:=\frac{1}{H(U)}\uu+\frac{3}{2}A\uv
        +\epsilon_1\uy_1+\cdots+\epsilon_{n-1}\uy_{n-1}
      \in \bZ^n
\]
for which $(\uy_1,\dots,\uy_n)$ is a basis of $\bZ^n$ because
$|\det(\uy_1,\dots,\uy_n)|=1$.  Since $H(U)\ge 1$, we also find
\[
 \Big\|\uy_n-\frac{3}{2}A\uv\Big\|
  \le 1+\frac{1}{2}\big(\|\uy_1\|+\cdots+\|\uy_{n-1}\|\big)
  \le \frac{A}{2}
\]
and thus $A\le \|\uy_n\|\le 2A$.  This shows that the
conditions 1) and 2) hold.

Since the orthogonal projection of $\uy_n$ on $V^\perp$
has norm at least
\begin{equation}
 \label{lem2:eq}
 |\uy_n\cdot\uv|
 = \Big|\frac{3}{2}A+\epsilon_1\uy_1\cdot\uv\Big|
 \ge \frac{3}{2}A - \frac{1}{2}\|\uy_1\|
 \ge A,
\end{equation}
we find that
\[
 \dist(\uy_n,\langle\uy_2,\dots,\uy_{n-1}\rangle_\bR)
 = \dist(\uy_n,V)
 = \frac{\|\proj_{V^\perp}(\uy_n)\|}{\|\uy_n\|}
 \ge \frac{A}{\|\uy_n\|}
 \ge \frac{1}{2}.
\]
We also note that
\[
 \dist(\uy_i,\langle\uy_2,\dots,\uy_{i-1}\rangle_\bR)
 \ge \dist(\uy_i,\langle\uy_1,\dots,\uy_{i-1}\rangle_\bR)
 \ge \frac{1}{2}
 \quad
 (3\le i\le n-1)
\]
because $(\uy_1,\dots,\uy_{n-1})$ is almost orthogonal.  Thus
$(\uy_2,\dots,\uy_n)$ is almost orthogonal as well, and so
the condition 3) holds.

Let $U':=\langle\uy_2,\dots,\uy_n\rangle_\bR$ and let $\uu'$
be a unit vector perpendicular to $U'$.  Since $V\subset U'$,
we have $\uu'\in V^\perp$ and so we can write
\[
 \uu'=a\uu+b\uv
\]
for some $a,b\in\bR$ with $a^2+b^2=1$.  Since $\uy_n\in U'$,
we have $0 =\uy_n\cdot\uu'$ and so
\[
 |b| = |a| \frac{|\uy_n\cdot\uu|}{|\uy_n\cdot\uv|}
     \le \frac{|\uy_n\cdot\uu|}{A}
     = \frac{1}{A H(U)}
\]
where the middle inequality uses \eqref{lem2:eq} and $|a|\le 1$.
We conclude that
\[
 \dist(\uu,\uu')
  = \|\uu\wedge\uu'\|
  = \| b\uu\wedge\uv \|
  = |b|
  \le \frac{1}{AH(U)}.
\]
Finally, we find that
\[
 1=|\det(\uy_1,\dots,\uy_n)|
  =|\uy_1\cdot\uu'|\,\|\uy_2\wedge\cdots\wedge\uy_n\|
  =|\uy_1\cdot\uu'| H(U')
\]
and so $|\uy_1\cdot\uu'|=H(U')^{-1}$.
\end{proof}

\begin{proposition}
\label{prop}
Let $(A_i)_{i\ge 1}$ be a sequence of real numbers with $A_1\ge 1$
and $A_{i+1}\ge 4A_i$ for each $i\ge 1$.  Then there exist a sequence
of points $(\ux_i)_{i\ge 1}$ in $\bZ^n$ and a unit vector $\uu$ of
$\bR^n$ which, for each index $i\ge 1$, fulfil the following conditions:
\begin{itemize}
 \item[1)] $(\ux_i,\ux_{i+1},\dots,\ux_{i+n-1})$ is a basis of $\bZ^n$,
 \item[2)] $A_i\le \|\ux_i\|\le 2A_i$,
 \item[3)] $2^{-n} \le |\ux_i\cdot\uu|\,A_{i+1}\cdots A_{i+n-1} \le 2^n$.
\end{itemize}
\end{proposition}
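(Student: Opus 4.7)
The plan is to construct the sequence $(\ux_i)_{i\ge1}$ recursively by iterating Lemma~\ref{lem2}, and to obtain $\uu$ as a limit of unit vectors $\uu_i$ orthogonal to the hyperplanes $U_i:=\langle\ux_i,\dots,\ux_{i+n-2}\rangle_\bR$.

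For the base case I apply Lemma~\ref{lem3}: for each $i=1,\dots,n-1$ pick an integer $B_i$ with $B_i\ge 2^{i-1}$ and $\sqrt{B_i^2+1}\in[A_i,2A_i]$, which is possible because $A_i\ge 4^{i-1}\ge 2^{i-1}$. Setting $\ux_i=B_i\ue_i+\ue_{i+1}$ then yields an almost orthogonal primitive $(n-1)$-tuple with $A_i\le\|\ux_i\|\le 2A_i$. Inductively, suppose $\ux_1,\dots,\ux_{i+n-2}$ have been built so that $(\ux_j,\dots,\ux_{j+n-1})$ is a basis of $\bZ^n$ for each $j\le i-1$, the sliding window $(\ux_i,\dots,\ux_{i+n-2})$ is almost orthogonal and primitive, and $A_k\le\|\ux_k\|\le 2A_k$ throughout. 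I apply Lemma~\ref{lem2} with $\uy_k:=\ux_{i+k-1}$ and $A:=A_{i+n-1}$; the required hypothesis $A\ge 2+\sum_k\|\uy_k\|$ is readily verified from $\|\uy_k\|\le 2A_{i+k-1}$ and the fast growth $A_{i+n-1}\ge 4A_{i+n-2}$ (the geometric series in the $A_j$ is dominated by its last term). Setting $\ux_{i+n-1}:=\uy_n$ extends the sequence and preserves the inductive properties, establishing conditions 1 and 2 for all $i$.

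Let $\uu_i$ denote a unit vector perpendicular to $U_i$. Lemma~\ref{lem2}.4 together with Lemma~\ref{lem1} gives $\dist(\uu_i,\uu_{i+1})\le 1/(A_{i+n-1}H(U_i))\le 2^{n-2}/(A_i\cdots A_{i+n-1})$. Since consecutive products $A_k\cdots A_{k+n-1}$ grow by a factor at least $4^n$, the series $\sum_k\dist(\uu_k,\uu_{k+1})$ converges geometrically, so $(\uu_k)$ is Cauchy in $\bP^{n-1}(\bR)$. Choosing signs $\epsilon_k\in\{\pm1\}$ with $\epsilon_{k+1}\uu_{k+1}\cdot\epsilon_k\uu_k\ge 0$, the sequence $\epsilon_k\uu_k$ converges in $\bR^n$ to a unit vector $\uu$, and a telescoping estimate using $\|\epsilon_{k+1}\uu_{k+1}-\epsilon_k\uu_k\|\le\sqrt{2}\,\dist(\uu_k,\uu_{k+1})$ yields $\|\uu-\epsilon_{i+1}\uu_{i+1}\|\le C_n/(A_{i+1}\cdots A_{i+n})$ for an explicit constant $C_n$ depending only on $n$.

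For condition 3, Lemma~\ref{lem2}.4 gives $|\ux_i\cdot\uu_{i+1}|=1/H(U_{i+1})$, while Lemma~\ref{lem1} shows $H(U_{i+1})\in[2^{-(n-2)}P,\,2^{n-1}P]$ with $P:=A_{i+1}\cdots A_{i+n-1}$, so the main term $|\ux_i\cdot\uu_{i+1}|\,P$ lies in $[2^{-(n-1)},2^{n-2}]$. The correction $|\ux_i\cdot(\uu-\epsilon_{i+1}\uu_{i+1})|\,P$ is bounded above by $\|\ux_i\|\cdot\|\uu-\epsilon_{i+1}\uu_{i+1}\|\cdot P\le 2A_iC_n/A_{i+n}\le 2C_n/4^n$, which is a constant multiple of $2^{-n}$. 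The main obstacle is exactly this numerical bookkeeping: one must track constants tightly enough that the correction remains strictly less than the gap from $2^{-(n-1)}$ down to $2^{-n}$ (and from $2^{n-2}$ up to $2^n$), so that $|\ux_i\cdot\uu|\,P\in[2^{-n},2^n]$. The assumption $A_{i+1}\ge 4A_i$, rather than a weaker growth condition, is precisely what produces the geometric decay needed to close the numerical estimates.
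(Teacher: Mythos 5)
Your strategy coincides with the paper's: build the initial window via Lemma~\ref{lem3}, extend recursively via Lemma~\ref{lem2} with the parameter $A=A_{i+n-1}$, pass to the limiting unit vector $\uu$, and estimate $|\ux_i\cdot\uu|$ by comparison with $|\ux_i\cdot\uu_{i+1}|=H(U_{i+1})^{-1}$. The gap is in the final paragraph: you compute that the correction term $|\ux_i\cdot(\uu-\epsilon_{i+1}\uu_{i+1})|\,P$ is ``a constant multiple of $2^{-n}$'' and then assert that the remaining ``numerical bookkeeping'' must be carried out tightly enough to close the argument, without actually doing so. That bookkeeping is precisely the content of condition 3), so the proof is not finished until it is performed. (In fact it does close: with your constant $C_n=\sqrt{2}\cdot 2^{n-2}/(1-4^{-n})$ the correction works out to about $0.75\cdot 2^{-n}$, below the slack $2^{-(n-1)}-2^{-n}=2^{-n}$, but this has to be verified and stated, and the margin is not generous.)

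The paper sidesteps this delicacy by normalizing differently. Rather than comparing $|\ux_i\cdot\uu|\,P$ against the target interval, it first bounds
\[
|\ux_i\cdot\uu-\ux_i\cdot\uu_{i+1}|
 \le 2\|\ux_i\|\,\dist(\uu,\uu_{i+1})
 \le \frac{4\|\ux_i\|}{A_{i+n}\,H(U_{i+1})}
 \le \frac{1}{2H(U_{i+1})},
\]
the last inequality coming from $A_{i+n}\ge 4^nA_i\ge 8\|\ux_i\|$. Since $|\ux_i\cdot\uu_{i+1}|=H(U_{i+1})^{-1}$, this says the correction is at most half the main term, so $|\ux_i\cdot\uu|$ lies in $[\,\frac{1}{2}H(U_{i+1})^{-1},\,2H(U_{i+1})^{-1}\,]$ with a transparent factor of $2$. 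Only then does it invoke the Lemma~\ref{lem1} bounds $2^{-(n-2)}P\le H(U_{i+1})\le 2^{n-1}P$, which multiply through to give $|\ux_i\cdot\uu|\,P\in[2^{-n},2^{n-1}]\subset[2^{-n},2^n]$ with no additive cancellation to monitor. You should either adopt this multiplicative comparison or supply the omitted numerical verification; as written, the proposal identifies the final obstacle but does not overcome it.
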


\begin{proof}
We first construct an almost orthogonal primitive
$(n-1)$-tuple $(\ux_1,\dots,\ux_{n-1})$ as in Lemma \ref{lem3}
using $B_1=\lceil A_1\rceil, \dots, B_{n-1}=\lceil A_{n-1}\rceil$.
Then these points satisfy $A_i\le \|\ux_i\|\le 2A_i$ for
$i=1,\dots,n-1$. We set
\[
 U_1=\langle\ux_1,\dots,\ux_{n-1}\rangle_\bR
\]
and denote by $\uu_1$ a unit vector of $\bR^n$ orthogonal to $U_1$.
Then, using the fact that
\[
  2+2A_i+\cdots+2A_{i+n-2}
  \le
  2(1+A_1+\cdots+A_{i+n-2})
  \le
  A_{i+n-1}
  \quad
  (i\ge 1),
\]
Lemma \ref{lem2} allows us to construct recursively, for each $i\ge 1$,
an additional integer point $\ux_{i+n-1}$, an additional
($n-1$)-dimensional vector subspace $U_{i+1}$ and an additional
unit vector $\uu_{i+1}$ with the following properties
\begin{itemize}
 \item[1)] $A_{i+n-1} \le \|\ux_{i+n-1}\| \le 2A_{i+n-1}$,
 \item[2)] $(\ux_i,\dots,\ux_{i+n-1})$ is a basis of $\bZ^n$,
 \item[3)] $(\ux_{i+1},\dots,\ux_{i+n-1})$ is almost orthogonal,
 \item[4)] $U_{i+1}=\langle\ux_{i+1},\dots,\ux_{i+n-1}\rangle_\bR$
   and $\uu_{i+1}$ is perpendicular to $U_{i+1}$,
 \item[5)] $\dist(\uu_i,\uu_{i+1}) \le A_{i+n-1}^{-1}H(U_i)^{-1}$,
 \item[6)] $|\ux_i\cdot\uu_{i+1}| = H(U_{i+1})^{-1}$.
\end{itemize}
Thanks to Lemma \ref{lem1}, we have
\[
 2^{-(n-2)}\|\ux_i\|\cdots\|\ux_{i+n-2}\|
  \le H(U_i)
  \le \|\ux_i\|\cdots\|\ux_{i+n-2}\|
 \quad
 (i\ge 1),
\]
and therefore
\begin{equation}
 \label{prop:eq}
 2^{-(n-2)}A_i\cdots A_{i+n-2}
  \le H(U_i)
  \le 2^{n-1}A_i\cdots A_{i+n-2}
 \quad
 (i\ge 1).
\end{equation}
In view of the growth of the sequence $(A_i)_{i\ge 1}$, this
implies that $H(U_{i+1})\ge 2H(U_i)$ for each $i\ge 1$.
Then, using 5), we deduce that the image of $(\uu_i)_{i\ge 1}$
in $\bP^{n-1}(\bR)$ converges to the class of a unit vector
$\uu$ with
\[
 \dist(\uu_i,\uu)
 \le \sum_{j=i}^\infty \dist(\uu_j,\uu_{j+1})
 \le \sum_{j=i}^\infty \frac{1}{A_{j+n-1}H(U_j)}
 \le \frac{2}{A_{i+n-1}H(U_i)}
 \quad
 (i\ge 1).
\]
Fix an index $i\ge 1$. Upon replacing $\uu_{i+1}$ by $-\uu_{i+1}$
if necessary, we may assume that $\uu_{i+1}\cdot\uu\ge 0$.
Then, the above estimate yields
\begin{align*}
 |\ux_i\cdot\uu-\ux_i\cdot\uu_{i+1}|
 &\le \|\ux_i\|\,\|\uu-\uu_{i+1}\| \\
 &\le 2\|\ux_i\|\dist(\uu,\uu_{i+1})
 \le \frac{4\|\ux_i\|}{A_{i+n}H(U_{i+1})}
 \le \frac{1}{2H(U_{i+1})}
\end{align*}
since $A_{i+n}\ge 4^nA_i\ge 8\|\ux_i\|$.
In view of 6), this implies that
\[
 \frac{1}{2H(U_{i+1})}
   \le |\ux_i\cdot\uu|
   \le \frac{2}{H(U_{i+1})}.
\]
Using the estimates for $H(U_{i+1})$ given by
\eqref{prop:eq}, this shows that the third condition
of the proposition is satisfied.
\end{proof}

In view of the formula \eqref{intro:eq:lambda(x,C)} for
$\lambda(\ux,\cC_\uu(Q))$, the estimates of the
proposition yield the following result.

\begin{corollary}
 \label{cor}
Let the notation be as in the proposition.  For each
integer $i\ge 1$ and each real number $Q\ge 1$, we have
\[
 2^{-n}\frac{A_i}{Q}\max\left\{1,\, \frac{Q}{Q_i}\right\}^n
   \le \lambda(\ux_i,\cC_\uu(Q))
   \le 2^n\frac{A_i}{Q}\max\left\{1,\, \frac{Q}{Q_i}\right\}^n.
\]
where $Q_i=(A_i\cdots A_{i+n-1})^{1/n}$.
\end{corollary}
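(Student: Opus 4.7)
The plan is simply to apply the explicit formula \eqref{intro:eq:lambda(x,C)} and substitute the bounds on $\|\ux_i\|$ and $|\ux_i\cdot\uu|$ supplied by parts 2) and 3) of Proposition \ref{prop}. The content of the corollary is purely a repackaging of those estimates into a uniform two-sided bound on the distance function, depending only on where $Q$ sits relative to $Q_i$.

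First I would rewrite part 3) using the definition of $Q_i$: since $A_{i+1}\cdots A_{i+n-1}=Q_i^n/A_i$, part 3) becomes
\[
 2^{-n}\,\frac{A_i}{Q_i^n}\le |\ux_i\cdot\uu|\le 2^n\,\frac{A_i}{Q_i^n}.
\]
Combined with part 2), the two quantities whose maximum defines $\lambda(\ux_i,\cC_\uu(Q))$ satisfy
\[
 \frac{A_i}{Q}\le \frac{\|\ux_i\|}{Q}\le 2\frac{A_i}{Q}
 \et
 2^{-n}\frac{A_iQ^{n-1}}{Q_i^n}\le |\ux_i\cdot\uu|Q^{n-1}\le 2^n\frac{A_iQ^{n-1}}{Q_i^n}.
\]
Setting $M(Q):=\max\{1,Q/Q_i\}^n$, the target bounds of the corollary read $2^{\pm n}(A_i/Q)M(Q)$, and one checks that $(A_i/Q)M(Q)$ equals $A_i/Q$ when $Q\le Q_i$ and equals $A_iQ^{n-1}/Q_i^n=(A_i/Q)(Q/Q_i)^n$ when $Q\ge Q_i$.

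For the upper bound I would note that each of the two quantities above is bounded by $2^n(A_i/Q)M(Q)$: the first because $\|\ux_i\|/Q\le 2A_i/Q\le 2^n(A_i/Q)M(Q)$, and the second because $|\ux_i\cdot\uu|Q^{n-1}\le 2^n(A_i/Q)(Q/Q_i)^n\le 2^n(A_i/Q)M(Q)$. Taking the maximum then gives $\lambda(\ux_i,\cC_\uu(Q))\le 2^n(A_i/Q)M(Q)$.

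For the lower bound I would split into two cases. If $Q\le Q_i$, then $M(Q)=1$ and the first quantity $\|\ux_i\|/Q\ge A_i/Q=2^{-n}(A_i/Q)M(Q)\cdot 2^n\ge 2^{-n}(A_i/Q)M(Q)$ already suffices. If $Q\ge Q_i$, then $M(Q)=(Q/Q_i)^n$ and the second quantity $|\ux_i\cdot\uu|Q^{n-1}\ge 2^{-n}A_iQ^{n-1}/Q_i^n=2^{-n}(A_i/Q)M(Q)$ suffices. In either case the maximum of the two quantities dominates $2^{-n}(A_i/Q)M(Q)$, completing the argument. There is really no obstacle here: the statement is a mechanical consequence of the proposition, with the only minor care being the case split at $Q=Q_i$, which is precisely the breakpoint between the two ``slopes'' seen by the trajectory of $\ux_i$.
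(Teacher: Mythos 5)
Your proof is correct and is exactly the verification the paper leaves implicit (the paper states the corollary as an immediate consequence of the formula \eqref{intro:eq:lambda(x,C)} and the estimates of Proposition \ref{prop}, without writing out details). The rewriting of condition~3) via $A_{i+1}\cdots A_{i+n-1}=Q_i^n/A_i$, followed by the case split at $Q=Q_i$, is precisely what is needed.
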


\section{Proof of the main theorem}
\label{sec:proof}

To deduce our main theorem from Proposition \ref{prop}
and its corollary, we simply use the following well-known
principle.

\begin{lemma}
 \label{lem4}
Let $\cC$ be a convex body of $\bR^n$ and let $\uy_1,\dots,\uy_n$
be linearly independent points of $\bZ^n$.  Suppose that
\begin{equation}
 \label{lem4:eq}
 \lambda(\uy_1,\cC)\cdots\lambda(\uy_n,\cC)\vol(\cC) \le B
\end{equation}
for some real number $B$.  Then, we have
\[
 \big(\lambda_1(\cC),\dots,\lambda_n(\cC)\big)
 \le \Phi_n\big(\lambda(\uy_1,\cC),\dots,\lambda(\uy_n,\cC)\big)
 \le \frac{n!B}{2^n}\big(\lambda_1(\cC),\dots,\lambda_n(\cC)\big),
\]
where the inequality is meant component-wise.
\end{lemma}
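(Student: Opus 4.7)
The plan is to prove both inequalities for the sorted tuple, relying on the definition of successive minima for the left inequality and on Minkowski's second theorem together with the hypothesis for the right inequality. Let $\mu_1\le\mu_2\le\cdots\le\mu_n$ denote the components of $\Phi_n(\lambda(\uy_1,\cC),\dots,\lambda(\uy_n,\cC))$, namely the values $\lambda(\uy_i,\cC)$ listed in increasing order, say $\mu_j=\lambda(\uy_{i_j},\cC)$ with $(i_1,\dots,i_n)$ a permutation of $(1,\dots,n)$.

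For the left inequality, I want to show $\lambda_j(\cC)\le\mu_j$ for each $j=1,\dots,n$. By definition of $\mu_j$, the integer points $\uy_{i_1},\dots,\uy_{i_j}$ satisfy $\lambda(\uy_{i_k},\cC)\le\mu_j$ for $k=1,\dots,j$, and hence all belong to $\mu_j\cC$. Since $(\uy_1,\dots,\uy_n)$ is linearly independent, so is any subsequence, and therefore $\mu_j\cC$ contains at least $j$ linearly independent integer points. The minimality in the definition of the $j$-th successive minimum then gives $\lambda_j(\cC)\le\mu_j$, as required.

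For the right inequality, the key ingredient is Minkowski's second theorem, which yields the lower bound
\[
 \lambda_1(\cC)\cdots\lambda_n(\cC)\vol(\cC)\ge \frac{2^n}{n!}.
\]
Combined with $\prod_i\mu_i=\prod_i\lambda(\uy_i,\cC)\le B/\vol(\cC)$ from the hypothesis \eqref{lem4:eq}, and the componentwise inequality $\mu_i\ge\lambda_i(\cC)$ established in the previous paragraph, I fix $j$ and estimate
\[
 \mu_j \;=\; \frac{\prod_{i=1}^n\mu_i}{\prod_{i\ne j}\mu_i}
      \;\le\; \frac{B/\vol(\cC)}{\prod_{i\ne j}\lambda_i(\cC)}
      \;=\; \frac{B\,\lambda_j(\cC)}{\vol(\cC)\prod_{i=1}^n\lambda_i(\cC)}
      \;\le\; \frac{n!\,B}{2^n}\,\lambda_j(\cC),
\]
which is exactly the desired componentwise bound.

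The two paragraphs above essentially complete the argument; there is no real obstacle beyond recalling Minkowski's second theorem and noticing that the product hypothesis, together with the already-proved lower bound $\mu_i\ge\lambda_i(\cC)$ for the other indices $i\ne j$, transfers cleanly into an upper bound on the single factor $\mu_j$. The mildly subtle point is to remember to establish the first inequality \emph{before} using it to estimate $\prod_{i\ne j}\mu_i$ from below in the derivation of the second inequality; otherwise the two assertions are logically independent and elementary.
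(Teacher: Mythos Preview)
Your proof is correct and follows essentially the same approach as the paper: the left inequality comes directly from the definition of successive minima, and the right inequality from combining the hypothesis with Minkowski's second theorem and the already-established left inequality. The paper is more terse in the second step, but your explicit chain of inequalities is exactly the computation it has in mind.
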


\begin{proof}
Choose a permutation $\sigma\in S_n$ such that
$\lambda(\uy_{\sigma(1)},\cC) \le \cdots \le
\lambda(\uy_{\sigma(n)},\cC)$.  By definition of the successive
minima, we have $\lambda_j(\cC) \le \lambda(\uy_{\sigma(j)},\cC)$
for $j=1,\dots,n$. As Minkowski's second convex body theorem gives
\[
 \frac{2^n}{n!} \le \lambda_1(\cC)\cdots\lambda_n(\cC)\vol(\cC),
\]
comparison with \eqref{lem4:eq} yields
\[
 \lambda_j(\cC)
  \le \lambda(\uy_{\sigma(j)},\cC)
  \le \frac{n!B}{2^n}\lambda_j(\cC)
 \quad
 (1\le j\le n).
\qedhere
\]
\end{proof}

\medskip
\begin{proof}[\textbf{Proof of Theorem \ref{intro:thm}}]
Let $(X_i)_{i\ge1}$ and $(q_i)_{i\ge 1}$ be as in
Definition \ref{def}, for the given quasi-regular
$(n,0)$-system $\uP$.  We define
\[
 A_i:=\exp(X_i)
 \quad
 (i\ge 1).
\]
For this choice of parameters, we select a sequence of integer
points $(\ux_i)_{i\ge 1}$ and a unit vector $\uu$ which satisfy
the conclusion of Proposition \ref{prop}.  We also define
\[
 L(\ux_i,q) := \log\lambda(\ux_i,\cC_\uu(e^q))
 \quad
 (q\ge 0,\ i\ge 1).
\]
Since $\exp(q_j)=(A_j\cdots A_{j+n-1})^{1/n}$ for each $j\ge 1$,
Corollary \ref{cor} yields
\begin{equation}
 \label{proof:eq:L}
 | L(\ux_j,q) -X_j-n\max\{0,q-q_j\}+q | \le n\log(2)
 \quad
 (q\ge 0,\ j\ge 1).
\end{equation}
To show that the vector $\uu$ has the required property, we fix
an integer $i\ge 1$ and a real number $q\in[q_i,q_{i+1}]$.
The points $\ux_i,\dots,\ux_{i+n-1}$ form a basis of $\bZ^n$ and,
since $q_i\le q\le q_{i+1}$, the estimates \eqref{proof:eq:L}
show that they satisfy
\begin{align*}
 &|L(\ux_i,q)-X_i-n(q-q_i)+q|\le n\log2, \\
 &|L(\ux_{i+1},q)-X_{i+1}+q|\le n\log2, \\
 &\quad \cdots\\
 &|L(\ux_{i+n-1},q)-X_{i+n-1}+q|\le n\log2.
\end{align*}
On one hand, these inequalities give
\[
 \|\uP(q)-\Phi_n(L(\ux_i,q),\dots,L(\ux_{i+n-1},q))\|_\infty
 \le n\log 2.
\]
On the other hand, since $\vol(\cC(e^q))\le 2^n$, they
also lead to
\[
 L(\ux_i,q)+\cdots+L(\ux_{i+n-1},q)+\log\vol(\cC(e^q))
 \le
 (n^2+n)\log2
\]
which, by Lemma \ref{lem4}, implies that
\[
 \|\uL_\uu(q)-\Phi_n(L(\ux_i,q),\dots,L(\ux_{i+n-1},q))\|_\infty
 \le
 (n^2+n)\log2 + \log(n!/2^n).
\]
This gives $\|\uP(q)-\uL_\uu(q)\|_\infty \le (n^2+n)\log(2)+\log(n!)
\le 2n^2$, as requested.
\end{proof}

\subsection*{Acknowledgements}
Part of this research was done while the author was visiting Professor
Wolfgang Schmidt at the University of Colorado in Boulder for a week
in February 2013.  He warmly thanks him for his invitation and for suggesting
the problem discussed here.


\end{document}